\newcommand{\lo}[1]{\raisebox{-0.1ex}{#1}\,}
\newcommand{\loo}[1]{\raisebox{-0.2ex}{#1}\,}
\newcommand{\Lo}[1]{\raisebox{-0.3ex}{#1}\,}
\newcommand{\LO}[1]{\raisebox{-0.5ex}{#1}\,}
\newcommand{\LOO}[1]{\raisebox{-0.6ex}{#1}\,}
\newcommand{\R}{\mathbb R}
\newcommand{\C}{\mathbb C}
\newcommand{\N}{\mathbb N}
\newcommand{\norm}[1]{\left\| #1 \right\|}
\newcommand{\D}[1]{\text{d}#1}
\newcommand{\I}{\text{i}}
\newcommand{\e}{\text{e}}
\newcommand{\mc}[1]{\mathcal{#1}}
\newcommand{\mf}[1]{\mathfrak{#1}}
\DeclareMathOperator{\dom}{dom}
\DeclareMathOperator{\cl}{cl}
\DeclareMathOperator{\der}{der}
\theoremstyle{definition}
\newtheorem{defn}{Definition}[section]
\theoremstyle{plain}
\newtheorem{prop}[defn]{Proposition}
\theoremstyle{remark}
\newtheorem{ej}[defn]{Example}
\newtheorem{rk}[defn]{Remark}
\title{On the geometry underlying a real \\ Lie algebra representation}
\author{Rodrigo Vargas Le-Bert\footnote{
\Letter {\tt rvargas@inst-mat.utalca.cl}. Supported by Fondecyt Postdoctoral Grant N\textordmasculine 3110045. 
} }
\affil{
Instituto de Matemática y Física, Universidad de Talca \\
Casilla 747, Talca, Chile
}
\date{June 1, 2012}
\begin{document}

\maketitle

\begin{abstract}
Let $G$ be a real Lie group with Lie algebra $\mf g$.
Given a unitary representation $\pi$ of $G$, one obtains by differentiation a representation $\D\pi$ of $\mf g$ by unbounded, skew-adjoint operators on $\mc H$. Representations of $\mf g$ admitting such a description are called \emph{integrable,} and they can be geometrically seen as the action of $\mf g$ by derivations on the algebra of representative functions $g\mapsto\langle\xi,\pi(g)\eta\rangle$, which are  naturally defined on the homogeneous space $M=G/\ker\pi$. In other words, integrable representations of a real Lie algebra can always be seen as realizations of that algebra by vector fields on a homogeneous manifold.
Here we show how to use the coproduct of the universal enveloping algebra $\mc E(\mf g)$ to generalize this to representations which are not necessarily integrable. The  geometry now playing the role of $M$ is  a locally homogeneous space. This provides the basis for a geometric approach to integrability questions regarding Lie algebra representations.
\medskip \\
{\bf 2010 MSC}: 16W25, 17B15.
\end{abstract}

%

\setcounter{tocdepth}{2}
\tableofcontents

\section{Introduction}

%
%

Let $G$ be the simply connected, real Lie group with Lie algebra $\mf g$. Recall that the \emph{group algebra} $\C[G]$ is the involutive algebra consisting of finite, complex linear combinations of elements of $G$ with product and involution  obtained by linearly extending the maps $g\otimes h\mapsto gh$ and $g\mapsto g^{-1}$, respectively. 
We can also define a complex conjugation on $\C[G]$ by anti-linearly extending $\overline g= g$. 
The infinitesimal version of $\C[G]$ is the \emph{universal enveloping algebra} $\mc E = \mc E(\mf g)$, 
defined as the quotient of the complex tensor algebra $\mc T(\mf g) = \left(\bigoplus_{n\in\N} \mf g^{\otimes n}\right)\otimes\C$ by the ideal generated by the elements
\[
XY - YX - [X,Y],\quad X,Y\in\mf g.
\]
It comes equipped with the involution 
and complex conjugation 
obtained by linear extension from
\(
( X_1\cdots X_n)^\dagger =  (-1)^nX_n\cdots X_1\lo,
\)
and anti-linear extension from
\(
\overline{X_1\cdots X_n} = X_1\cdots X_n\lo,
\)
respectively.
The subalgebra of real elements with respect to this conjugation will be denoted by $\mc E_\R = \mc E_\R(\mf g)$.

\subsection{Integrable representations}


Consider a representation $\pi$ of $G$ by unitary operators on a Hilbert space $\mc H$. Given $g\in G$ and $\xi\in\mc H$, it is sometimes  convenient to write $g\xi$ instead of $\pi(g)\xi$, and consequently we can also think of $\mc H$ as a $\C[G]$-module. 
In what follows we will use interchangeably the languages of representation and module theory.

Let $M=G/H$, with $H=\ker\pi$.
We want to understand $\mc H$ as a submodule of $C(M)$, which is naturally a $\C[G]$-module via
\[
gf(x) = f(g^{-1}x),\quad f\in C(M).
\]
In order to do so,
suppose that there is a distinguished cyclic vector $\nu\in\mc H$, i.e.\ $\mc H = \overline{\C[G]\nu}$, and an anti-unitary operator $J:\mc H\rightarrow\mc H$ whose action commutes with that of $G$.  Then,
%
%
to each $\xi\in\mc H$,  associate the representative function
\[
e_\xi: gH\in M\mapsto \langle J\xi, g\nu\rangle \in\C.
\]
Since $ge_\xi = e_{g\xi}$\loo, we see that $\xi\mapsto e_\xi$ does the job of exposing $\mc H$ as a submodule of $C(M)$.
\begin{rk} \label{regarding J}
The anti-unitary $J$ is needed in order for $\xi\mapsto e_\xi$ to be linear. Its use can be easily avoided in several ways, such as for instance working with the \emph{anti-linear} $\C[G]$-module associated to $\pi$, or  considering the anti-holomorphic ``representative functions'' $g\mapsto\langle g\nu,\xi\rangle$. Instead of using any such non-standard convention, we have decided to restrict ourselves to modules admitting an anti-unitary $J$ as above.
\end{rk}

Now, let $\mc H^\infty\subseteq\mc H$ be the subspace of \emph{smooth vectors} for $G$, i.e.\ those $\xi\in\mc H$ such that the map
\(
g\in G\mapsto g\xi \in \mc H
\)
is smooth.\footnote{ 
$\mc H^\infty$ is also called the \emph{Gårding space} of $\pi$.
As Harish-Chandra pointed out~\cite{m:Hari53}, the space of \emph{analytic vectors} (see also
\cite{m:Nels59, m:Schm90}) is more adequate in studying representations of $G$. 
By contrast, when studying non-integrable representations of $\mf g$ the right domain is  $\mc H^\infty$ (representations having a dense subspace of analytic vectors are automatically integrable~\cite{m:Magy92}, even if $\mf g$ is infinite dimensional~\cite{m:Neeb11}).
}
Then, given $X\in\mf g$ and $\xi\in\mc H^\infty$, we can define
\[
\D\pi(X)\xi = X\xi = \lim_{t\rightarrow 0} \frac{1}{t}\bigl( \exp(tX)\xi - \xi \bigr),
\]
where $\exp:\mf g\rightarrow G$ is the exponential map.
This way, the unitary representation $\pi$ of $G$ gives rise to a representation $\D\pi$ of $\mf g$ by unbounded, skew-adjoint operators, thus naturally turning $\mc H^\infty$ into an $\mc E$-module. We will also say, under this circumstances, that $\mc H$  is an \emph{unbounded} $\mc E$-module.
As above, we can expose $\mc H^\infty$ as a submodule of $C^\infty(M)$, by identifying it with $\set{e_\xi | \xi\in\mc H^\infty}$. Now, the action of $\mf g$ is by derivation along the vector fields which generate the flows $gH\in M\mapsto \e^{tX}gH\in M$.

It will be helpful to be more precise regarding the notion of representation by unbounded operators, which we do in the next subsection. For the time being, however, we anticipate the following definition.
\begin{defn}
A representation of $\mf g$ is \emph{integrable} if it is of the form $X\mapsto \D\pi(X)|_{\mc F}$\loo, with $\pi$ a unitary representation of $G$ and $\mc F\subseteq \mc H^\infty$.
\end{defn}

\subsection{Unbounded operator algebras}

For a complete exposition of the material in this and the next subsection see~\cite{m:Schm90}. Here we follow~\cite{m:Varg11}.

Let  $\mc F$ be a complex vector space sitting densely inside a Hilbert space $\mc H$.

\begin{defn}
We write $\mc L^\dagger(\mc F)$ for the algebra
of linear operators $A:\mc F\rightarrow \mc F$ such that $\mc F\subseteq \dom(A^*)$.
It comes equipped with the involution $A^\dagger = A^*|_{\mc F}$\Lo. 
\end{defn}
\begin{defn}
We write $\mc C^*(\mc F)$ for the set of closed operators (in the sense of having closed graph) $A$ on $\mc H$ which satisfy $A|_{\mc F}\in\mc L^\dagger(\mc F)$.
\end{defn}

Analytically speaking, closed operators are better behaved than arbitrary ones, but this is not so from an algebraic viewpoint. Indeed,
given two closed, densely defined operators $A$ and $B$ on $\mc H$, we define---whenever it makes sense---their \emph{strong sum}  by
\[
A+B = \text{the closure of } A|_{\mc D} + B|_{\mc D}\loo,\quad \mc D = \dom A\cap \dom B.
\]
Analogously, we define their \emph{strong product} by
\[
AB = \text{the closure of } AB|_{\mc D}\loo,\quad  \mc D = B^{-1}\dom A.
\]
As it turns out, strong operations are always well-defined for elements of $\mc C^*(\mc F)$, but  the axioms of associativity and distributivity might well not hold.

\begin{defn}
An \emph{unbounded representation} of a complex involutive algebra $\mc A$ is an involutive algebra morphism $\pi:\mc A\rightarrow \mc L^\dagger(\mc F)$, where $\mc F$ sits densely in a Hilbert space $\mc H$. Under this circumstances, we also say that $\mc H$ is an unbounded $\mc A$-module. Finally, we say that $\pi$ is \emph{cyclic} if there exists a so-called \emph{cyclic vector} $\nu\in\mc F$ such that $\mc F = \mc A\nu$.
\end{defn}

Recall that there is a natural order defined on any involutive algebra $\mc A$: the one with positive cone generated by the elements of the form $A^\dagger A$. This, in turn, induces an order on the algebraic dual of $\mc A$, which will be written $\mc A^\dagger$: an $\omega\in\mc A^\dagger$ is positive if, and only if, it is positive on positive elements. We say that such an $\omega$ is a \emph{state} if, besides being positive, it is normalized by $\omega(1)=1$. We have introduced all this notions just to recall that there is a correspondence between cyclic representations  and states.
Given a representation $\pi:\mc A\rightarrow \mc L^\dagger(\mc F)$, one obtains a state $\omega\in\mc A^\dagger$ from any vector $\nu\in\mc F$ with $\norm\nu = 1$ by the formula
\[
\omega(A) = \langle \nu,A\nu\rangle,\quad A\in\mc A.
\]
The GNS construction allows for a recovery of both $\pi|_{\mc A\nu}$ (modulo unitary conjugation) and $\nu$ (modulo a phase factor) from $\omega$, thus establishing the afforementioned correspondence. We proceed to describe it briefly.

Let $\omega$ be a state of $\mc A$ and consider the set
\(
\mc I = \set{ A\in \mc A | \omega(A^\dagger A)=0 }.
\)
Using the Cauchy-Schwartz inequality it is easily seen that $\mc I$ is a left ideal. The quotient $\mc F = \mc A/\mc I$ is densely embedded in its Hilbert space completion $\mc H$ with respect to the scalar product
\[
\left\langle [A], [B]\right\rangle = \omega(A^\dagger B),\quad A,B\in\mc A,
\]
where $[\cdot]$ denotes the equivalence class in $\mc F$ of its argument.
The GNS representation $\pi$ is simply given by the canonical left $\mc A$-module structure of $\mc F$. It admits the cyclic vector $\nu = [1]\in\mc F$.

\subsection{Non-integrable representations}

As we saw above, integrable $\mc E$-modules are, essentially, spaces of smooth functions on homogeneous manifolds.
Our objective in this paper is to work out the analogous geometric point of view for non-integrable representations. 
Thus, it is pertinent to discuss  briefly the phenomenon of non-integrability.

%

From an operator algebraic point of view, integrability is just good behaviour, in the following sense. 
Consider a representation $\pi:\mc E\rightarrow \mc L^\dagger(\mc F)$.
Operators in $\mc L^\dagger(\mc F)$ are closable, because their adjoints are densely defined. Denote the closure of (the image under $\pi$ of) $E\in\mc E$ by $\cl(E)$. 
Then,  integrability of $\pi$ is equivalent to:
\begin{enumerate}
\item $\cl(E+F) = \cl(E) + \cl (F)$,
\item $\cl(EF) = \cl(E) \cl(F)$,
\item $\cl(E^\dagger) = \cl(E)^*$,
\end{enumerate}
where on the right hand sides strong operations  are meant (the last property actually implies the first two). Hence, integrability implies that $\cl(1+E^\dagger E)$ is invertible, for all $E\in\mc E$. Let us mention, by the way, that this relationship between integrability and invertibility of ``uniformly positive'' elements is deep: it can be shown~\cite{m:Varg11}, whenever
\[
\mc S = \text{the multiplicative set generated by } \Set{1+E^\dagger E | E\in\mc E}
\]
satisfies the Ore condition, that integrable representations are in bijection with representations of the Ore localization $\mc E\mc S^{-1}$. 

An important question is whether a given representation admits an integrable extension. 
One can extend the algebra, the module, or both.
Thus, the integrable extension problem in its full generality consists in completing the following diagram:
\begin{center} \begin{tikzpicture}[description/.style={fill=white,inner sep=2pt}] 
\matrix (m) [matrix of math nodes, row sep=2em, 
column sep=2.5em, text height=1.5ex, text depth=0.25ex] 
{ \tilde{\mc E} & \mc C^*(\tilde{\mc F}) \\ 
\mc E & \mc L^\dagger(\mc F), \\ }; 
\path[->,font=\scriptsize] 
(m-2-1) edge (m-2-2) 
(m-1-2) edge  (m-2-2)
(m-1-1) edge [dashed]  (m-1-2)
(m-1-1) edge [dashed]  (m-2-1);
\end{tikzpicture} \end{center} 
with $\tilde{\mc E}\supseteq \mc E$ and $\tilde{\mc F} \supseteq \mc F$. 
The image of $\tilde{\mc E}$ under the upper arrow must be a subset of $\mc C^*(\tilde{\mc F})$ which is an algebra with respect to the strong operations, all arrows must be involutive algebra morphisms, and the diagram must commute.
We do not necessarily require that $\tilde{\mc F}\subseteq \mc H$. Again under the hypothesis that the set $\mc S$ above satisfies the Ore condition, cyclic representations always admit an integrable extension with $\tilde{\mc E} = \mc E$, see~\cite{m:Varg11}.

\section{Algebraic construction}

\subsection{Algebraic structure of $\mc E^\dagger$}

In this subsection we set up the needed algebraic preliminaries regarding the dual $\mc E^\dagger$ of the enveloping algebra $\mc E=\mc E(\mf g)$. 
A full exposition is found in~\cite{m:Dixm77}.
We will also make use of some coalgebra notions, for which we mention~\cite{m:Mont93}.

The universal enveloping algebra $\mc E$ is not only an algebra: it comes naturally endowed with a coproduct, too. 
Recall that coproducts are dual to products, so that a coproduct is a linear function
\(
\Delta: \mc E \rightarrow \mc E\otimes \mc E
\)
which is coassociative, meaning that the following diagram commutes:
\begin{center} \begin{tikzpicture}[description/.style={fill=white,inner sep=2pt}] 
\matrix (m) [matrix of math nodes, row sep=2.5em, 
column sep=2.5em, text height=1.5ex, text depth=0.25ex] 
{ \mc E & \mc E\otimes\mc E \\ 
\mc E\otimes\mc E & \mc E\otimes\mc E\otimes\mc E. \\ }; 
\path[->,font=\scriptsize] 
(m-1-1) edge node[auto] {$\Delta$} (m-1-2) 
(m-1-1) edge node[auto] {$\Delta$} (m-2-1)
(m-1-2) edge node[auto] {$\text{Id}\otimes\Delta$} (m-2-2)
(m-2-1) edge node[auto] {$\Delta\otimes\text{Id}$} (m-2-2);
\end{tikzpicture} \end{center} 
In order to define the coproduct of $\mc E$, we define first a coproduct $\Delta$
on the tensor algebra $\mc T(\mf g)$, by extending the linear map
\[
X \in\mf g\mapsto 1\otimes X + X\otimes 1\in\mc T(\mf g)\otimes \mc T(\mf g)
\]
to all of $\mc T(\mf g)$ as an algebra morphism (we are appealing here to the universal property of tensor algebras).
Now, the ideal $\mc I\subseteq \mc T(\mf g)$ generated by the elements
\(
XY - YX - [X,Y], 
\)
with $X, Y\in\mf g$,
is also a \emph{coideal}, i.e.\ 
\[
\Delta\mc I\subseteq \mc I\otimes\mc T(\mf g) + \mc T(\mf g)\otimes\mc I.
\]
Thus, it can be readily seen that $\Delta$ passes to the quotient $\mc E = \mc T(\mf g)/\mc I$.

It will be helpful to have an explicit formula for $\Delta$. Let us choose a basis $(X_1\lo,\dots,X_n)$ of $\mf g$ and use the multi-index notation
\(
X^\alpha = X_1^{\alpha_1}\cdots X_n^{\alpha_n}\Lo. 
\)
Using the fact that $\Delta$ is an algebra morphism together with the binomial formula, we see that
\[
\Delta X^\alpha = \sum_{\beta\leq\alpha} {\alpha\choose\beta} X^\beta\otimes X^{\alpha-\beta},
\]
where ${\alpha\choose\beta} = \frac{\alpha!}{\beta!(\alpha-\beta)!} = {\alpha_1\choose\beta_1} \cdots {\alpha_d\choose\beta_d}$\LO.

For us, the importance of the coproduct  resides in the fact that it enables one to turn the algebraic dual $\mc E^\dagger$ into an algebra, with product given by
\[
ab = (a\otimes b)\Delta,\quad a,b:\mc E\rightarrow \C.
\]
Recall that $\Delta$ is cocommutative, meaning that the following diagram commutes:
\begin{center} \begin{tikzpicture}[description/.style={fill=white,inner sep=2pt}] 
\matrix (m) [matrix of math nodes, row sep=.5em, 
column sep=3em, text height=1.5ex, text depth=0.25ex] 
{ 		& \mc E\otimes\mc E		\\
\mc E	&  					\\ 
 		& \mc E\otimes\mc E,	\\}; 
\path[->,font=\scriptsize] 
(m-1-2) 	edge node[auto] {$\tau$} 	(m-3-2) 
(m-2-1) 	edge node[above] {$\Delta$} 	(m-1-2)
(m-2-1) 	edge node[below] {$\Delta$} 	(m-3-2);
\end{tikzpicture} \end{center} 
where $\tau$ is obtained by linear extension from $E\otimes F\mapsto F\otimes E$, with $E,F\in\mc E$.
This implies that $\mc E^\dagger$ is a commutative complex algebra.
It is naturally an $\mc E$-module, too, with
\begin{equation} \label{Da}
Ea(F) = a\bigl(\overline{E}{}^\dagger F\bigr),\quad E,F\in\mc E,\ a\in\mc E^\dagger.
\end{equation}
The main properties of $\mc E^\dagger$ are given in the next two propositions.

\begin{prop} \label{g = derivations of E'}
The action \eqref{Da} defines a morphism $\mf g\rightarrow \der(\mc E^\dagger)$, where
$\der(\mc E^\dagger)$ stands for the Lie algebra of linear maps $\delta:\mc E^\dagger\rightarrow \mc E^\dagger$ such that
\[
(\forall a,b\in\mc E^\dagger)\ \delta(ab)=\delta(a)b+a\delta(b).
\]
\end{prop}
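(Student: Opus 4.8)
The statement bundles two assertions: that each $X\in\mf g$ acts on $\mc E^\dagger$ by a \emph{derivation}, and that the assignment $X\mapsto(a\mapsto Xa)$ is a \emph{Lie algebra} morphism. The plan is to prove them in that order. Before either, I would record two elementary facts. First, $\overline{X}{}^\dagger=-X$ for $X\in\mf g$, since complex conjugation fixes the degree-one element $X$ while the involution negates it; thus by \eqref{Da} the action reads $Xa(F)=-a(XF)$. Second, $X$ is \emph{primitive} in the coalgebra $\mc E$, i.e. $\Delta X=X\otimes 1+1\otimes X$, which is immediate from the explicit formula for $\Delta X^\alpha$ evaluated at a unit multi-index. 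The whole argument hinges on this primitivity.

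For the derivation property I would compute directly. Writing $\Delta F=\sum_i P_i\otimes Q_i$ and unwinding the product $(ab)(F)=(a\otimes b)(\Delta F)$ together with $Xa(F)=-a(XF)$, the key step is that $\Delta$ is an algebra morphism, so $\Delta(XF)=(X\otimes 1+1\otimes X)\,\Delta F=\sum_i\bigl(XP_i\otimes Q_i+P_i\otimes XQ_i\bigr)$. Expanding gives $X(ab)(F)=-\sum_i\bigl[a(XP_i)b(Q_i)+a(P_i)b(XQ_i)\bigr]$, and on the other side $(Xa)b(F)=-\sum_i a(XP_i)b(Q_i)$ and $a(Xb)(F)=-\sum_i a(P_i)b(XQ_i)$. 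These match term by term once the sign from $\overline{X}{}^\dagger=-X$ is tracked through, so $\delta_X\colon a\mapsto Xa$ obeys the Leibniz rule and lands in $\der(\mc E^\dagger)$.

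The morphism property then costs almost nothing, using that \eqref{Da} already makes $\mc E^\dagger$ a left $\mc E$-module (as noted in the text, this rests on $E\mapsto\overline{E}{}^\dagger$ being order-reversing, so that $(EF)a=E(Fa)$). Since a module action is additive and multiplicative, restricting $E\mapsto\delta_E$ to $\mf g$ yields $[\delta_X,\delta_Y]=\delta_{XY}-\delta_{YX}=\delta_{XY-YX}=\delta_{[X,Y]}$, where the last equality uses that $XY-YX=[X,Y]$ already holds in $\mc E$. Combined with the previous paragraph, this exhibits $X\mapsto\delta_X$ as a Lie algebra morphism with values in $\der(\mc E^\dagger)$.

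I do not anticipate a deep obstacle; the argument is essentially a bookkeeping exercise. The one point demanding care is keeping the composite map $E\mapsto\overline{E}{}^\dagger$ and its attendant signs straight, since it is this map (not conjugation or $\dagger$ alone) that simultaneously governs the left-module property and produces the sign in $Xa(F)=-a(XF)$. Conceptually, the real content is the interplay between the coproduct, which builds the product on $\mc E^\dagger$, and the product, which builds the action: it is precisely the primitivity $\Delta X=X\otimes 1+1\otimes X$ that converts ``act by $X$'' into a first-order Leibniz rule rather than a higher-order operator.
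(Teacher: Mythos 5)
Your proof is correct and follows essentially the same route as the paper's: both arguments rest on the primitivity of $X$ (equivalently of $X^\dagger=-X$), i.e.\ $\Delta X = X\otimes 1 + 1\otimes X$, combined with $\Delta$ being an algebra morphism, then unwound against $a\otimes b$ to yield the Leibniz rule --- your Sweedler-style expansion $\Delta F=\sum_i P_i\otimes Q_i$ is just a notational variant of the paper's operator-composition computation $X(ab)=(a\otimes b)\circ\Delta\circ X^\dagger(\cdot)$. The only difference is that you additionally verify the bracket compatibility $[\delta_X,\delta_Y]=\delta_{[X,Y]}$ explicitly via the left-module identity $(EF)a=E(Fa)$, a point the paper's proof leaves implicit.
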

\begin{proof}
Indeed, let $X\in\mf g$ and $a,b\in \mc E^\dagger$. One has that
\[ 
X(ab) = X((a\otimes b)\circ \Delta) = (a\otimes b) \circ \Delta \circ X^\dagger(\cdot).
\] 
On the other hand, given $E\in\mc E$,
\begin{align*}
\Delta(X^\dagger E) &= \Delta (X^\dagger) \Delta(E) = (X^\dagger \otimes 1 + 1\otimes X^\dagger)\Delta(E) \\
	&= \bigl( \bigl(X^\dagger(\cdot)\otimes \text{Id} + \text{Id}\otimes X^\dagger(\cdot) \bigr)\circ \Delta \bigr)(E).
\end{align*}
Thus, replacing above,
\begin{align*}
X(ab) &= (a\otimes b) \circ \bigl(X^\dagger(\cdot)\otimes \text{Id} + \text{Id}\otimes X^\dagger(\cdot) \bigr)\circ \Delta \\
	&= (Xa\otimes b + a\otimes Xb)\circ\Delta 
	= (Xa)b + a(Xb). \qedhere
\end{align*}
\end{proof}

\begin{prop} \label{E^dagger = power series}
Let $X_1\lo,\dots, X_n$ be a basis of $\mf g$. 
One has that 
\(
\mc E^\dagger \cong \C\lsem x_1,\dots,x_n \rsem, 
\)
with algebra isomorphism given by 
\[
a\in\mc E^\dagger\mapsto f_a\in\C\lsem x_1,\dots x_n\rsem,\quad f_a(x) = \sum_{\alpha\in\N^n} \frac{x^\alpha}{\alpha!} a(X^\alpha). \qedhere
\]
\end{prop}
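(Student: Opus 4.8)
The plan is to establish the two halves of the claim separately: that $a\mapsto f_a$ is a bijective linear map of vector spaces, and that it intertwines the product on $\mc E^\dagger$ with the (Cauchy) product of formal power series.

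First I would invoke the Poincar\'e--Birkhoff--Witt theorem, which asserts that the ordered monomials $X^\alpha=X_1^{\alpha_1}\cdots X_n^{\alpha_n}$, $\alpha\in\N^n$, constitute a vector-space basis of $\mc E$. Hence a functional $a\in\mc E^\dagger$ is freely and uniquely determined by the family of scalars $\bigl(a(X^\alpha)\bigr)_{\alpha\in\N^n}$: it vanishes precisely when all of them do, and every prescribed family is realised by some $a$. Because the coefficient of $x^\alpha$ in $f_a$ equals $a(X^\alpha)/\alpha!$ with $\alpha!\neq0$, the assignment $a\mapsto f_a$ is visibly linear, injective (by the spanning property) and surjective (given $f=\sum_\alpha c_\alpha x^\alpha$, put $a(X^\alpha)=\alpha!\,c_\alpha$). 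This already exhibits it as a linear isomorphism onto $\C\lsem x_1,\dots,x_n\rsem$, so only multiplicativity remains.

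For that I would evaluate the product on the basis. Using the definition $ab=(a\otimes b)\Delta$ together with the explicit coproduct formula recorded above --- and noting that $\Delta X^\gamma$ is a finite sum, so no convergence is at issue --- one gets
\[
(ab)(X^\gamma)=(a\otimes b)(\Delta X^\gamma)=\sum_{\beta\leq\gamma}\binom{\gamma}{\beta}\,a(X^\beta)\,b(X^{\gamma-\beta}).
\]
Dividing by $\gamma!$ and using $\binom{\gamma}{\beta}/\gamma!=1/\bigl(\beta!\,(\gamma-\beta)!\bigr)$, the $x^\gamma$-coefficient of $f_{ab}$ becomes
\[
\sum_{\beta\leq\gamma}\frac{a(X^\beta)}{\beta!}\cdot\frac{b(X^{\gamma-\beta})}{(\gamma-\beta)!},
\]
which is exactly the $x^\gamma$-coefficient of the product $f_a f_b$; thus $f_{ab}=f_af_b$.

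The calculation is essentially forced once the normalisation is fixed, so I do not expect a genuine obstacle. The two facts doing the real work are PBW --- needed to know that $\mc E^\dagger$ carries no hidden relations among the coefficients $a(X^\alpha)$, so that the map is onto \emph{all} of $\C\lsem x_1,\dots,x_n\rsem$ --- and the binomial shape of $\Delta X^\gamma$, which is precisely what the divided-power normalisation $x^\alpha/\alpha!$ is engineered to convert into an ordinary Cauchy product. The only point requiring a little care is the multi-index bookkeeping: one must be sure the coproduct formula is being applied to the \emph{ordered} PBW monomials, for which it was stated, so that the factorisation $X^\beta\otimes X^{\gamma-\beta}$ in $\mc E\otimes\mc E$ is legitimate.
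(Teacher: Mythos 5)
Your proposal is correct and follows essentially the same route as the paper: the multiplicativity check via $ab=(a\otimes b)\Delta$ and the binomial coproduct formula $\Delta X^\gamma=\sum_{\beta\leq\gamma}\binom{\gamma}{\beta}X^\beta\otimes X^{\gamma-\beta}$ is exactly the paper's computation. The only difference is that you spell out the linear-bijectivity half via Poincar\'e--Birkhoff--Witt, which the paper leaves implicit --- a reasonable addition, not a divergence.
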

\begin{proof}
Indeed, given $a,b\in \mc E^\dagger$, 
\begin{align*}
f_{ab}(x) &= \sum_{\alpha} \frac{x^\alpha}{\alpha!} ab(X^\alpha)
	= \sum_\alpha \frac{x^\alpha}{\alpha!} (a\otimes b)(\Delta X^\alpha)
	= \sum_{\alpha} \frac{x^\alpha}{\alpha!} (a\otimes b)\left( \sum_{\beta\leq\alpha} {\alpha\choose\beta} X^\beta\otimes X^{\alpha-\beta}\right) \\
	&= \sum_\beta\sum_\gamma \frac{x^\beta x^\gamma}{\beta!\gamma!} a(X^\beta)b(X^\gamma) = f_a(x)f_b(x). \qedhere
\end{align*}
\end{proof}
\begin{rk} \label{E' = formal taylor series}
Observe that 
\[
ab(1) = (a\otimes b)\Delta 1 = a(1)b(1).
\]
Thus, evaluation at $1\in\mc E$ is a character of $\mc E^\dagger$, which we will denote by $\chi_0$\loo,
and elements of $\mc E^\dagger$ are, essentially, formal Taylor expansions around $\chi_0$\loo. 
\end{rk}

The fact that $\mc E^\dagger$ is a formal power series algebra suggests endowing it with an involution, corresponding to complex conjugation of the coefficients. Explicitly, this is given by
\[
a^\dagger(E+\I F) = \overline{a(E)} + \I \overline{a(F)}, \quad E,F\in\mc E_\R\loo.
\]
With respect to this conjugation, $\chi_0$ becomes a \emph{real} character:
\[
\chi_0(a^\dagger) = a^\dagger (1) = \overline{a(1)} = \overline{\chi_0(a)}.
\]
\begin{prop} \label{(Da)^dagger = bar D a^dagger}
For all $a\in\mc E^\dagger$ and $E\in\mc E$, $(Ea)^\dagger = \overline Ea^\dagger$.
\end{prop}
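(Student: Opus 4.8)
The plan is to first replace the coordinatewise definition of the involution on $\mc E^\dagger$ by a coordinate-free formula, and then reduce the claim to two structural facts about the two conjugation-type operations that live on $\mc E$ itself.

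First I would record that the involution on $\mc E^\dagger$ can be rewritten as
\[
a^\dagger(E) = \overline{a(\overline E)}, \quad E \in \mc E.
\]
Indeed, writing $E = E_1 + \I E_2$ with $E_1, E_2 \in \mc E_\R$, anti-linearity of the conjugation gives $\overline E = E_1 - \I E_2$, so $\overline{a(\overline E)} = \overline{a(E_1)} + \I\,\overline{a(E_2)}$, which is exactly the defining expression for $a^\dagger(E)$. This reformulation is the key convenience: it lets me evaluate $a^\dagger$ on arbitrary, not necessarily real, elements of $\mc E$ without ever splitting into real and imaginary parts.

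Next I would isolate the two facts about $\mc E$ that do the real work. First, complex conjugation is an algebra homomorphism, $\overline{EF} = \overline E\,\overline F$; this holds on the monomials $X_1\cdots X_n$, which are fixed by the conjugation, and extends by anti-linearity. Second, conjugation and the involution $\dagger$ of $\mc E$ commute, $\overline{E^\dagger} = (\overline E)^\dagger$; this is again checked on the real monomials $X_1\cdots X_n$, where both sides equal $(-1)^n X_n\cdots X_1$, and then extended by anti-linearity, both sides being anti-linear. I would also use that conjugation is involutive, $\overline{\overline E} = E$. The one place I expect any scope for error is precisely here, in keeping straight that $\dagger$ is $\C$-linear and anti-multiplicative while conjugation is anti-linear and multiplicative; beyond that bookkeeping I do not anticipate a genuine obstacle.

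With these in hand the verification is a direct computation on an arbitrary $F \in \mc E$. Using the reformulated involution together with the module action \eqref{Da},
\[
(Ea)^\dagger(F) = \overline{(Ea)(\overline F)} = \overline{a\bigl(\overline{E}^{\dagger}\,\overline F\bigr)}.
\]
On the other hand, applying \eqref{Da} with $\overline E$ in place of $E$ and using $\overline{\overline E} = E$,
\[
(\overline E\, a^\dagger)(F) = a^\dagger\bigl(\overline{\overline{E}}^{\dagger} F\bigr) = a^\dagger\bigl(E^\dagger F\bigr) = \overline{a\bigl(\overline{E^\dagger F}\bigr)}.
\]
The two structural facts give $\overline{E^\dagger F} = \overline{E^\dagger}\,\overline F = \overline{E}^{\dagger}\,\overline F$, so the two right-hand sides coincide for every $F$, which is the desired identity $(Ea)^\dagger = \overline E\,a^\dagger$.
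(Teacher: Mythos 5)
Your proof is correct and follows essentially the same route as the paper's: the paper verifies $(Ea)^\dagger(F) = \overline{a(E^\dagger F)} = Ea^\dagger(F)$ for $E,F\in\mc E_\R$ and extends by (anti-)linearity, implicitly relying on exactly the two structural facts you prove (conjugation is multiplicative and commutes with $\dagger$, so that $E^\dagger F$ is again real). Your reformulation $a^\dagger(E) = \overline{a(\overline E)}$ simply lets you run the same computation on arbitrary elements instead of reducing to real ones, making explicit what the paper's ``the general case follows immediately'' leaves implicit.
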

\begin{proof}
For $E,F\in\mc E_\R$\raisebox{-0.3ex},\, one has
\(
(Ea)^\dagger(F) = \overline{ a(E^\dagger F) } = Ea^\dagger (F), 
\)
and the general case follows immediately.
\end{proof}

\subsection{$M$ as an affine scheme}

Let $\pi:\mc E\rightarrow \mc L^\dagger(\mc F)$ be a representation of $\mc E=\mc E(\mf g)$ admitting a cyclic vector $\nu\in\mc F$, where $\mf g$ is a real Lie algebra and $\mc F$ is a dense subspace of a Hilbert space~$\mc H$. We will suppose that there exists an anti-unitary  $J:\mc H\rightarrow \mc H$ commuting with $\pi$ and such that $J\nu=\nu$, even though this assumption is superfluous, see Remark~\ref{regarding J}.
Recall that we  look for some naturally defined space $M$ together with a realization of $\mf g$ by vector fields on it, such that the corresponding coordinate algebra~$\mc A$, which is an $\mc E$-module,  contains $\mc F$ as a submodule. In this subsection
we realize $\mc A$ as a subalgebra of $\mc E^\dagger$.

Consider the linear map
\[
\xi\in\mc F \mapsto e_\xi\in\mc E^\dagger,\quad e_\xi(E) = \langle J\xi, E\nu\rangle,
\]
and let $\mc A$ be the involutive subalgebra of $\mc E^\dagger$ generated by $\set{e_\xi|\xi\in\mc F}$. 
We define
\[
M = \Set{ \chi\in\mc A^\dagger | \chi(ab) = \chi(a)\chi(b),\ \chi(a^\dagger) = \overline{\chi(a)}}.
\]
Note that $\chi_0\in M$. 
\begin{rk}
The map $\xi\mapsto e_\xi$ is well defined all over $\mc H$. However, we regard the functions $e_\xi$ with $\xi\in\mc H\setminus\mc F$ as not belonging to $\mc A$, for they are not smooth in the sense that, given $E\in\mc E$,  $Ee_\xi$ might not be of the form $e_\eta$\lo, for any $\eta\in\mc H$.
\end{rk}
\begin{rk} \label{pathological characters}
The existence of pathological characters should be expected, for $\mc A$ can be infinite dimensional. In the next section we will define a natural topology on $\mc A$ which will enable us to exclude them, if needed.
\end{rk}

\begin{prop} \label{Xe_(xi) = e_(Xxi)}
One has that
\(
Ee_{\xi} = e_{E\xi}\loo, 
\)
for all  $\xi\in\mc F$ and $E\in\mc E$. In particular, $\mc A$ is a submodule of $\mc E^\dagger$.
\end{prop}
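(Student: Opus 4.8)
The plan is to prove the identity pointwise: fix an arbitrary $F\in\mc E$ and compute $(Ee_\xi)(F)$, unwinding everything back to the definition of $e_{E\xi}$. By the definition \eqref{Da} of the $\mc E$-action on the dual, together with the definition of $e_\xi$,
\[
(Ee_\xi)(F) = e_\xi\bigl(\overline{E}^\dagger F\bigr) = \langle J\xi,\ \pi\bigl(\overline{E}^\dagger F\bigr)\nu\rangle = \langle J\xi,\ \pi(\overline{E})^\dagger\,\pi(F)\nu\rangle,
\]
where I used that $\pi$ is an involutive algebra morphism, so $\pi(\overline{E}^\dagger)=\pi(\overline{E})^\dagger=\pi(\overline{E})^*|_{\mc F}$. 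The vector $\pi(F)\nu$ lies in $\mc F$, hence in the domain of $\pi(\overline{E})^*$ by definition of $\mc L^\dagger(\mc F)$, so this expression is well defined. The next move is to push the adjoint onto the left slot of the inner product, obtaining $\langle \pi(\overline{E})\,J\xi,\ \pi(F)\nu\rangle$; this is legitimate provided $J\xi\in\dom\pi(\overline E)$, which I address below.

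The crux of the argument, and the step that needs genuine care, is the interplay between $J$ and $\pi$. First I would make precise the hypothesis that $J$ ``commutes with $\pi$'': since $\mf g$ consists of self-conjugate elements and $J$ is anti-unitary, commutation with each $\pi(X)$, $X\in\mf g$, propagates multiplicatively and anti-linearly to the relation
\[
J\,\pi(E)\,J^{-1} = \pi(\overline{E}),\qquad\text{equivalently}\qquad \pi(\overline{E})\,J = J\,\pi(E),\quad E\in\mc E.
\]
The subtlety is that it is the complex conjugation $E\mapsto\overline E$, \emph{not} the identity, that appears, precisely because $J$ is anti-linear; one must check the formula on the real generators $X\in\mf g$ and then extend. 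I would also record that $J$ preserves $\mc F$: writing $\xi=\pi(G)\nu$ (cyclicity) gives $J\xi=J\pi(G)\nu=\pi(\overline G)\nu\in\mc F$, which justifies $J\xi\in\dom\pi(\overline E)$ used above. Applying the intertwining relation to $\pi(\overline E)J\xi=J\pi(E)\xi=J(E\xi)$ then yields
\[
(Ee_\xi)(F) = \langle J(E\xi),\ \pi(F)\nu\rangle = e_{E\xi}(F),
\]
and since $F$ was arbitrary, $Ee_\xi=e_{E\xi}$.

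For the final assertion that $\mc A$ is a submodule, I would note that the linear span $V=\{e_\xi\mid \xi\in\mc F\}$ is $\mc E$-stable: this is immediate from the identity just proved, the linearity of $\xi\mapsto e_\xi$, and the fact that $\mc F$ is itself an $\mc E$-module, so $E\xi\in\mc F$. Because $\mf g$ generates $\mc E$ and acts on $\mc E^\dagger$ by derivations (Proposition~\ref{g = derivations of E'}), the involutive subalgebra $\mc A$ generated by the $\mf g$-invariant subspace $V$ is again $\mf g$-invariant: the Leibniz rule keeps products of generators inside $\mc A$, and for the conjugate generators one uses Proposition~\ref{(Da)^dagger = bar D a^dagger} together with $\overline X=X$ for $X\in\mf g$, giving $Xe_\xi^\dagger=(Xe_\xi)^\dagger=e_{X\xi}^\dagger\in\mc A$. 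Hence $\mc A$ is $\mc E$-invariant. I expect the only delicate point in the whole proof to be the bookkeeping of the anti-linearity of $J$ (and the resulting appearance of $\overline E$) and the verification that $J$ stabilizes $\mc F$; every other step is a direct substitution.
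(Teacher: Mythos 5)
Your proof is correct and follows essentially the same route as the paper's: evaluate $Ee_\xi$ pointwise at $F$, unwind the action \eqref{Da}, move the adjoint across the inner product, and use the $J$-intertwining relation to obtain $e_{E\xi}(F)$. The only difference is that you carefully justify two steps the paper leaves implicit --- that anti-linearity of $J$ forces the relation $\pi(\overline E)J = J\pi(E)$ rather than plain commutation, and that $J$ preserves $\mc F$ (so the adjoint step is legitimate) --- together with spelling out the submodule claim via the derivation property and Proposition~\ref{(Da)^dagger = bar D a^dagger}; these additions are accurate and strengthen, rather than alter, the argument.
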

\begin{proof}
Indeed, given $F\in\mc E$,
\[
Ee_\xi(F) = \langle J\xi, \overline{E}{}^\dagger F\nu \rangle = \langle \overline{E}J\xi, F\nu \rangle = \langle JE\xi,F\nu \rangle = e_{E\xi}(F). \qedhere
\]
\end{proof}
By Proposition \ref{(Da)^dagger = bar D a^dagger}, Proposition \ref{Xe_(xi) = e_(Xxi)} and cyclicity, $\mc A$ is the  involutive subalgebra  of $\mc E^\dagger$ generated by the orbit of the state
\[
\omega\in\mc E^\dagger,\quad \omega(E) = e_\nu(E) =  \langle \nu, E\nu\rangle
\]
under the action of $\mc E$. 
Thus, the fundamental object is $\omega\in\mc E^\dagger$, and the representation $\pi$ can be recovered from it by GNS construction.

The affine scheme $M$ can be quite far from being a differential manifold, but it is at least \emph{reduced} (has no nilpotents) and \emph{irreducible} (has exactly one minimal prime ideal), as follows from the fact that $\mc E^\dagger\cong \C\lsem x_1\lo,\dots,x_n\rsem$ is an integral domain.

\begin{ej} 
Consider the case
\[
\mf g = \R X,\quad \mc F = \mc H = \C^2,\quad X = \left(\begin{smallmatrix} 0 & 1 \\ -1 & 0 \end{smallmatrix}\right),\quad \nu = \left(\begin{smallmatrix} 1 \\ 0 \end{smallmatrix}\right).
\]
Recall that the elements of $\mc A$ are, essentially, formal power series around~$\chi_0$\raisebox{-0.3ex},\, where $\chi_0(a) = a(1)$. Here, we have only the derivation $X$, which will be, say, with respect to the coordinate $x$. The power series corresponding to $\omega$ is computed as follows:
\begin{align*}
\omega|_{x=0} &\cong \chi_0(\omega) = \omega(1) = \langle\nu,\nu\rangle = 1, \\
\partial_x\omega|_{x=0} &\cong \chi_0(X\omega) = \langle \nu, X^\dagger\nu\rangle = \left\langle \left(\begin{smallmatrix} 1 \\ 0 \end{smallmatrix}\right), \left(\begin{smallmatrix} 0 & -1 \\ 1 & 0 \end{smallmatrix}\right) \left(\begin{smallmatrix} 1 \\ 0 \end{smallmatrix}\right)\right\rangle = 0, \\
\partial_x^2\omega|_{x=0} &\cong \chi_0(X^2\omega) = \langle \nu, (X^\dagger)^2\nu\rangle = \left\langle \left(\begin{smallmatrix} 1 \\ 0 \end{smallmatrix}\right), \left(\begin{smallmatrix} -1 & 0 \\ 0 &-1 \end{smallmatrix}\right) \left(\begin{smallmatrix} 1 \\ 0 \end{smallmatrix}\right)\right\rangle = -1, 
\end{align*}
and so on. Thus,
\(
\omega \cong \cos(x),
\)
as one might have anticipated, and $\mc A$ is the algebra of trigonometric polynomials. 
\end{ej}

\section{Analytic considerations}

As mentioned in Remark~\ref{pathological characters}, the existence of pathological characters should be expected in general. In this section we propose a topology for $\mc A$, and therefore a  notion of non-pathological character (continuity). Our choice is based on viewing $\mc A$ as (a subalgebra of) a quotient of the algebra of polynomial functions on $\mc H$. For more information regarding analytic functions on locally convex spaces, see~\cite{m:Maze84, m:Dine81}.

\subsection{Polynomials and power series on $\mc H$} 

Let $\mc H$ be a Hilbert space, together with a distinguished anti-unitary map $J$. 
Recall that the algebraic tensor product $\mc H^{\otimes n}$ comes equipped with the scalar product
\begin{equation} \label{scalar_product_H^(otimes n)}
\langle \xi_1\otimes \cdots\otimes \xi_n\raisebox{-0.2ex},\, \eta_1\otimes\cdots\otimes \eta_n\rangle = \langle \xi_1\raisebox{-0.2ex},\, \eta_1\rangle \cdots \langle \xi_n\raisebox{-0.2ex},\, \eta_n\rangle.
\end{equation}
The symmetric group $S_n$ acts on $\mc H^{\otimes n}$ by
\[
\sigma(\xi_1\otimes\cdots \otimes \xi_n) = \xi_{\sigma(1)}\otimes\cdots\otimes \xi_{\sigma(n)}\Lo.
\]
Let
\(
\mc H^n = S_n\backslash \mc H^{\otimes n}.
\)
An orbit $S_n(\xi_1\otimes\cdots\otimes\xi_n) \in\mc H^n$ will be written as $\xi_1\cdots\xi_n$\loo.
There is a canonical scalar product in $\mc H^n$, inherited from that of $\mc H^{\otimes n}$. Indeed, $\mc H^n$ can be identified with the subspace $P_+(\mc H^{\otimes n})$, where the projection
\[
P_+(\xi_1\otimes\cdots\otimes\xi_n) = \frac{1}{n!}\sum_{\sigma \in S_n} \sigma(\xi_1\otimes\cdots\otimes \xi_n).
\]
Then, we define
\(
\langle \xi_1\cdots\xi_n\lo, \eta_1\cdots \eta_n\rangle = \langle \xi_1\otimes \cdots\otimes\xi_n\lo, P_+(\eta_1\otimes\cdots\otimes\eta_n)\rangle.
\)
Now,
let
\[
\C[\mc H] = \bigoplus_{n\in\N} \mc H^n \subseteq \prod_{n\in\N} \mc H^n = \C\lsem\mc H\rsem.
\]
Given $(\varphi_n)\in \C[\mc H]$ and $(\psi_n) \in \C\lsem \mc H\rsem$, one has the (sesquilinear) pairing
\[
\langle (\varphi_n), (\psi_n) \rangle = \sum_{n\in\N} \langle \varphi_n\raisebox{-0.2ex},\, \psi_n \rangle,
\]
which is well-defined because the sum is finite. 

There is a multiplication  on~$\C\lsem\mc H\rsem$, defined by linear extension from
\[
\varphi\psi = P_+(\varphi\otimes\psi) \in \mc H^{n+m},\quad \varphi\in\mc H^n,\ \psi\in\mc H^m,
\]
where we are using the identification $\mc H^n\cong P_+(\mc H^{\otimes n})$. Explicitly, the product of $(\varphi_n), (\psi_n) \in \C\lsem\mc H\rsem$ is given by
\[
\bigl((\varphi_n)(\psi_n)\bigr)_n = \sum_{i+j=n} \varphi_i\psi_j\Lo.
\]
Thus, $\C\lsem\mc H\rsem$ is an algebra.
Note that $\C[\mc H]\subseteq \C\lsem\mc H\rsem$ is a subalgebra.
We think of $\C[\mc H]$ as a space of polynomial functions on $\mc H$, as follows: given $\Phi = (\varphi_n) \in \C[\mc H]$, its evaluation at $\xi\in\mc H$ is
\[
\Phi(\xi) = \left\langle \Phi, \Omega_{\xi} \right\rangle,\quad \Omega_\xi = (\xi^n)\in \C\lsem\mc H\rsem. 
\]
Correspondingly, we think of $\C\lsem\mc H\rsem$ as a space of formal infinite jets on $\mc H$.

\subsection{Norms on $\C[\mc H]$ and the weak nullstellensatz}

In the finite dimensional case, the weak nullstellensatz  implies  automatic continuity for characters, and there lies the proof's essential difficulty. However, take any discontinuous linear functional $\mc H\rightarrow \C$ and extend it to $\C[\mc H]$ as a character; it does not correspond to any point in~$\mc H$, and should not be continuous for any reasonable topology. Thus, we see that, in general, continuity is not automatic,\footnote{If, as it will, the topology of $\C[\mc H]$ is defined by a family of seminorms, then Michael's conjecture, hopefully proved in~\cite{m:Sten98}, would imply that the discontinuous characters above admit no extension to the completion of $\C[\mc H]$.} and we must make a choice of topology for~$\C[\mc H]$.

First, given $r>0$, consider the norm 
\[
\norm{(\varphi_n)}_r = \sum_{n\in\N} \norm{\varphi_n}r^n,\quad (\varphi_n)\in\C[\mc H].
\]
In order to identify the topological dual of $\C[\mc H]$ with respect to~$\norm\cdot_r$ recall that the completion of $\mc H^{\otimes n}$ with respect to its scalar product  is a Hilbert space, which we will denote by $\mc H^{\boxtimes n}$ (in general, $\boxtimes$ will be the Hilbert space tensor product).
Since the action of $S_n$ on $\mc H^{\otimes n}$ is by isometries, it extends by continuity to~$\mc H^{\boxtimes n}$. We define
\[
\mc H^{\boxdot n} = S_n\backslash \mc H^{\boxtimes n}.
\]
Then, it is easily seen that the topological dual of $\C[\mc H]$ with respect to $\norm{\cdot}_r$ can be identified with
\[
\Set{ (\varphi_n) \in \prod_{n\in\N} \mc H^{\boxdot n} | \norm{\varphi_n}\leq O(r^n) } 
\]
and, in particular,  contains 
\(
\Set{\Omega_{\xi} \in \C\lsem\mc H\rsem | \xi\in \mc H_r}.
\)

\begin{prop}
The completion $\mc O(\mc H_r)$ of $\C[\mc H]$ with respect to $\norm{\cdot}_r$ is a Fréchet algebra. 
\end{prop}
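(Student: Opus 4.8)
The plan is to reduce the entire statement to the submultiplicativity of $\norm\cdot_r$ on the polynomial algebra $\C[\mc H]$. Once we know that
\[
\norm{\Phi\Psi}_r\le\norm{\Phi}_r\,\norm{\Psi}_r,\qquad \Phi,\Psi\in\C[\mc H],
\]
multiplication is a bounded bilinear map, so it extends uniquely and continuously to the completion, and associativity and commutativity pass to the closure by density. The completion of a normed algebra under a submultiplicative norm being a Banach algebra --- in particular a Fr\'echet algebra --- this settles the proposition. Thus the whole matter rests on a single estimate at the level of symmetric powers, which is where I would concentrate the work.

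That estimate is the bound
\[
\norm{\varphi\psi}\le\norm{\varphi}\,\norm{\psi},\qquad \varphi\psi = P_+(\varphi\otimes\psi)\in\mc H^{n+m},
\]
for $\varphi\in\mc H^n$ and $\psi\in\mc H^m$. I would establish it in three observations. First, under the identification $\mc H^n\cong P_+(\mc H^{\otimes n})$ the scalar product of $\mc H^n$ is precisely the restriction of that of $\mc H^{\otimes n}$, so $\norm{\varphi}_{\mc H^n}=\norm{\varphi}_{\mc H^{\otimes n}}$ whenever $P_+\varphi=\varphi$. Second, the scalar product \eqref{scalar_product_H^(otimes n)} is multiplicative under $\otimes$, giving $\norm{\varphi\otimes\psi}_{\mc H^{\otimes(n+m)}}=\norm{\varphi}\,\norm{\psi}$. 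Third, $P_+=\frac{1}{(n+m)!}\sum_{\sigma\in S_{n+m}}\sigma$ is an average of unitaries, hence a self-adjoint idempotent, i.e.\ an orthogonal projection, and therefore a contraction. Combining the three,
\[
\norm{\varphi\psi}=\norm{P_+(\varphi\otimes\psi)}\le\norm{\varphi\otimes\psi}=\norm{\varphi}\,\norm{\psi}.
\]

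With this in hand the submultiplicativity of $\norm\cdot_r$ follows by the triangle inequality and a reindexing:
\begin{align*}
\norm{\Phi\Psi}_r &= \sum_{n}\norm{\sum_{i+j=n}\varphi_i\psi_j}\,r^{n}\le\sum_{i,j}\norm{\varphi_i}\,\norm{\psi_j}\,r^{i+j} \\
&= \Bigl(\sum_i\norm{\varphi_i}r^i\Bigr)\Bigl(\sum_j\norm{\psi_j}r^j\Bigr)=\norm{\Phi}_r\,\norm{\Psi}_r.
\end{align*}
Realizing the completion concretely as $\mc O(\mc H_r)=\set{(\varphi_n)\in\prod_{n}\mc H^{\boxdot n} | \sum_n\norm{\varphi_n}r^n<\infty}$, the formula $(\Phi\Psi)_n=\sum_{i+j=n}\varphi_i\psi_j$ still makes literal sense there --- each term lies in $\mc H^{\boxdot(i+j)}$ via the contractive extension of $P_+$, and the inner sum is finite --- and the displayed bound shows $\Phi\Psi\in\mc O(\mc H_r)$, so the product needs no separate limiting definition. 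I expect the only genuinely substantive point, and the one I would verify most carefully, to be the contraction property of the symmetrizer $P_+$ together with the compatibility of the term-by-term product with the passage to $\prod_n\mc H^{\boxdot n}$; everything else is routine bookkeeping with the norm.
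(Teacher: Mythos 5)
Your proof is correct and follows essentially the same route as the paper's: multiplicativity of the tensor norm, contractivity of the symmetrization $P_+$, and then submultiplicativity of $\norm\cdot_r$ by the triangle inequality and reindexing, so that the completion is a Banach (hence Fr\'echet) algebra. The only difference is that you make explicit two points the paper leaves implicit --- that $P_+$ is an orthogonal projection and that the $\mc H^n$ norm is the restriction of the $\mc H^{\otimes n}$ norm --- which is a welcome clarification rather than a deviation.
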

\begin{proof}
Indeed, since 
\[
\norm{\varphi\otimes\psi}_{\mc H^{\otimes n}\otimes \mc H^{\otimes m}} = \norm{\varphi}_{\mc H^{\otimes n}} \norm{\psi}_{\mc H^{\otimes m}}\LOO, \quad \varphi\in \mc H^{\otimes n},\ \psi\in\mc H^{\otimes n},
\]
the  product $\mc H^n\otimes \mc H^m\rightarrow \mc H^{n+m}$ satisfies
\[ 
\norm{\varphi\psi}_{\mc H^{n+m}} \leq \norm{\varphi}_{\mc H^n}\norm{\psi}_{\mc H^m}\LOO. 
\]
Thus, given $(\varphi_n), (\psi_n) \in \C[\mc H]$,
\[
\norm{(\varphi_n)(\psi_n)}_r \leq \sum_{i,j\in\N} \norm{\varphi_i}\norm{\psi_j} r^{i+j} = \norm{(\varphi_n)}_r \norm{(\psi_n)}_r\LO.  \qedhere
\]
\end{proof}

Now, let $r'>r$. Clearly,  $\norm\cdot_{r}$ is dominated by $\norm\cdot_{r'}$\LO.  Thus, we get a Banach algebra morphism $\mc O(\mc H_{r'})\rightarrow \mc O(\mc H_{r})$. 
\begin{defn}
$\mc O(\mc H)$ will denote the Michael-Arens algebra obtained as the projective limit $\mc O(\mc H) = \varprojlim \mc O(\mc H_r)$.
\end{defn}

\begin{prop}
There is a bijective correspondence between continuous characters of $\C[\mc H]$ and points of $\mc H$, given by 
\[
\xi\in\mc H \mapsto\Omega_{\xi} \in\mc \C[\mc H]^*.
\]
\end{prop}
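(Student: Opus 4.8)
The plan is to verify that $\xi\mapsto\Omega_\xi$ is a well-defined injection into the continuous characters and then to establish surjectivity from the fact that $\C[\mc H]$ is generated in degree one. I read continuity with respect to the locally convex topology generated by the family $\{\norm\cdot_r\}_{r>0}$ (the topology $\C[\mc H]$ inherits from $\mc O(\mc H)$); since this family is totally ordered and increasing in $r$, a functional is continuous exactly when it is dominated by $C\norm\cdot_r$ for some single $r$.

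First I would check that each $\Omega_\xi$ is a continuous character. Continuity is immediate from Cauchy--Schwarz together with $\norm{\xi^n}=\norm\xi^n$: one has $|\Omega_\xi(\Phi)|=|\sum_n\langle\varphi_n,\xi^n\rangle|\le\norm\Phi_{\norm\xi}$, so $\Omega_\xi$ is dominated by $\norm\cdot_r$ for every $r\ge\norm\xi$. Multiplicativity, $(\Phi\Psi)(\xi)=\Phi(\xi)\Psi(\xi)$, reduces to the identity $\langle\varphi_i\psi_j,\xi^{i+j}\rangle=\langle\varphi_i,\xi^i\rangle\langle\psi_j,\xi^j\rangle$; this holds because $P_+$ is a self-adjoint projection and $\xi^{\otimes(i+j)}$ is already symmetric, whence $\langle P_+(\varphi_i\otimes\psi_j),\xi^{i+j}\rangle=\langle\varphi_i\otimes\psi_j,\xi^{\otimes i}\otimes\xi^{\otimes j}\rangle$ factors over the two tensor legs. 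Since $\Omega_\xi(1)=1$, it is indeed a character. Injectivity is then read off the degree-one component: on $\mc H^1=\mc H$ one has $\Omega_\xi(\varphi)=\langle\varphi,\xi\rangle$, so nondegeneracy of the inner product forces $\xi=\eta$ whenever $\Omega_\xi=\Omega_\eta$.

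The substance of the statement is surjectivity. Given a continuous character $\chi$, I would first restrict it to $\mc H^1$: there it satisfies $|\chi(\varphi)|\le C\norm\varphi_r=Cr\norm\varphi$ for $\varphi\in\mc H^1$, hence is a bounded functional on $\mc H$, and the Riesz representation theorem supplies a unique $\xi\in\mc H$ with $\chi(\varphi)=\langle\varphi,\xi\rangle=\Omega_\xi(\varphi)$ for all $\varphi\in\mc H^1$. Now $\chi$ and $\Omega_\xi$ are two unital multiplicative functionals agreeing on $\mc H^1$. Since every symmetric tensor $\xi_1\cdots\xi_n=P_+(\xi_1\otimes\cdots\otimes\xi_n)$ is a product of degree-one elements, $\mc H^1$ generates $\C[\mc H]$ as a unital algebra; two multiplicative functionals that agree on a generating set agree everywhere, so $\chi=\Omega_\xi$ and the correspondence is onto.

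I expect the only real obstacle to be conceptual and to sit entirely in the surjectivity step, namely the passage from continuity of $\chi$ to boundedness of $\chi|_{\mc H^1}$, which is precisely what produces the Riesz point $\xi$. This is where the chosen topology earns its keep and where the pathological characters of Remark~\ref{pathological characters}---extensions of discontinuous linear functionals $\mc H\to\C$---are ruled out. Once $\chi|_{\mc H^1}$ is known to be represented by a point, the purely algebraic fact that $\C[\mc H]$ is generated in degree one dispatches the rest with no further analysis, and the multiplicativity computation of the first step is routine.
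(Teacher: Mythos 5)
Your proof is correct and follows essentially the same route as the paper's: restrict the continuous character to the degree-one component $\mc H^1$, apply Riesz' theorem to produce the point $\xi$, and then use multiplicativity on the degree-one generators to conclude $\chi=\Omega_\xi$ (the step the paper compresses into a single ``clearly''). You additionally spell out what the paper leaves implicit or distributes elsewhere---the continuity and multiplicativity of each $\Omega_\xi$, injectivity, and the fact that $\C[\mc H]$ is generated in degree one---while silently dropping the anti-unitary $J$ that the paper uses to absorb the sesquilinearity of the pairing (its Riesz vector is $J^*\xi$), a purely notational divergence that is harmless in view of Remark~\ref{regarding J}.
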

\begin{proof}
By restriction, a character $\chi\in\C[\mc H]^*$ defines a continuous, linear function $\mc H\rightarrow \C$ which, by Riesz' theorem, corresponds to a vector, say $J^*\xi\in\mc H$. 
Then, clearly 
\(
\chi(\Phi) = \langle J^*\Omega_{\xi}\lo, \Phi\rangle = \langle J\Phi, \Omega_\xi\rangle
= \Phi(\xi). 
\)
\end{proof}

\subsection{The embedding $M\hookrightarrow\mc H$}

Since $\mc E^\dagger$ is commutative, the assignment $\xi\in \mc F\mapsto e_\xi\in \mc A$ extends to an involutive algebra epimorphism $\pi:\C[\mc F]\rightarrow \mc A$. 
The subspace $\C[\mc F]$ is topologized as a subset of $\C[\mc H]$, and we equip $\mc A\cong \C[\mc F]/\ker\pi$ with the quotient topology. 
Thus, if we define $M$ to be the set of \emph{continuous} characters of~$\mc A$, we get an embedding $M\hookrightarrow \text{Spec } \C[\mc F]$ given by $\chi\mapsto \chi\circ\pi$.
Now, a continuous character of $\C[\mc F]$ extends uniquely by continuity to all of $\C[\mc H]$ and, therefore, $\text{Spec } \C[\mc F] \cong \mc H$. In this way, we have obtained an embedding $M\hookrightarrow \mc H$. 
\begin{rk}
It would be satisfying to have a theory of affine schemes with Arens-Michael structure algebras. A proposal in that sense is made in~\cite{m:Varg12}.
\end{rk}

We close by stating an important, obvious question that arises: is $M$ a smooth manifold? If not, a study of its possible singularities should shed some light on the structure of non-integrable representations of real Lie algebras.

\bibliographystyle{amsplain}
\bibliography{math}

\end{document}